\documentclass[11pt,a4paper,reqno]{amsart}
\usepackage[utf8]{inputenc}
\usepackage{amsmath}
\usepackage{amsthm}
\usepackage{amsfonts}
\usepackage{amssymb}
\usepackage{graphicx}
\usepackage{tikz}
\pgfdeclarelayer{nodelayer}
\pgfdeclarelayer{edgelayer}
\pgfsetlayers{edgelayer,nodelayer,main}
\tikzset{newstyle/.style={thick}}
\tikzset{simple/.style={thick}}
\theoremstyle{plain}
\newtheorem{theorem}{Theorem}[section]
\newtheorem{lemma}{Lemma}[section]

\theoremstyle{definition}
\newtheorem{definition}{Definition}[section]

\theoremstyle{remark}
\newtheorem{remark}{Remark}[section]

\numberwithin{equation}{section}
\title[Maximal entropy dissipation numerical scheme]{Maximal entropy 
dissipation numerical scheme for conservation law
systems}
\author{Marko Nedeljkov}
\date{}
\address{University of Novi Sad, Faculty of Sciences, Department of Mathematics and Inofrmatics, Serbia}
\email{marko@dmi.uns.ac.rs}
\begin{document}
\maketitle	
\begin{abstract}
This paper presents a numerical finite volume method for conservation law 
systems that are adapted to the principle of maximal dissipation. 
The general assumptions are an existence
of a strictly convex entropy functional and a finite propagation speed 
property of a given system. The procedure is based on a numerical flux
construction obtained by the minimization of the entropy functional
in each time step. The scheme satisfies assumptions
of the Lax-Wendroff theorem.  
A limiting solution obtained by this scheme is compared with the classical
weak solutions obtained by the Glimm or the Wave Front Tracking algorithm
for one-dimensional systems, respectively. 
\end{abstract}
\section{Introduction}
The principle of maximal entropy dissipation was introduced in \cite{CD_1973} 
as a tool for extracting physically relevant weak solutions to 
conservation law system. We are using more recent versions from 
\cite{CD_2012} and \cite{F2014}. The paper \cite{Hsiao} contains an early
comparison between the Lax entropy principle and proves several important differences. 
Suppose that we have a fixed mathematical 
entropy functional, together with the conservation law system. Using
this principle one can extract a weak solution that maximally dissipates 
the mathematical entropy in an arbitrary short time period.
The verification of this could be quite demanding for weak solutions.
Naturally, the term ``arbitrarily  small time step'' associates us to some  
a numerical procedure. Following that intuitive thought, we will make 
a numerical scheme that satisfies 
principle at each time step. A natural choice for the scheme is
Finite Volume Method (FVM). It is a link between the weak solution concept with test 
functions and the integral form, which is the primary tool for deriving
equations of fluid dynamics, see the classical monographs
\cite{Batchelor} or \cite{MarsdenChorin}. 
\medskip

The main properties and assumptions of this procedure are as follows:
\medskip 

\noindent -- The finite propagation property speed (FPPS) is crucial, 
and we localize the problem
by adding ghost cells for a given finite time interval, 
where a solution exists. A lot
of technical details are standard procedures from the books 
\cite{GR} and \cite{leveque}. For simplicity,
we are looking for solutions constant out of a compact set until the time 
variable reaches a fixed $T$. That can be relaxed by adding 
boundary data and an adequate choice
of values in the ghost cells, like in the books cited above.
However, this makes the estimation of entropy dissipation more difficult.
\medskip

\noindent -- The scheme is made to be conservative and consistent and one may use
the Lax-Wendroff theorem. 
\medskip 

\noindent -- We choose a numerical flux function adapted to the strictly convex
entropy. This provides a unique numerical solution at each time step.
\medskip

\noindent -- Any affine constraint, like non-negativity of the fluid 
physical density and energy can be easily added in the algorithm without 
losing the uniqueness at each time step.

\noindent -- The definition and procedure for deriving a solution in 
each time step are dimension-independent. The numerical flux depends 
on values in each neighborhood cell and a vector with components 
bounded between 0 and 1 with dimension equals the number of all
cell-to-cell surface. That makes the optimization problem quite challenging 
despite one knows that there is a unique solution due to the strict convexity
of the entropy functional and affine constraints. 
A na\'{i}ve, direct approach works reasonably fast only in one spatial dimension. 
For multidimensional problems, one has to use some more sophisticated numerical 
optimization technique. Recent development in high-performance computing 
gives a chance for this procedure to be useful. 
\medskip 

\noindent -- Due to the well-developed theory in one space dimension, we are able
to compare a limiting solution obtained by the procedure with a weak 
solution obtained by the Glimm or the Wave Front Tracking algorithm 
(WFT), which we shall call "classical" in the sequel.
Two main reasons are a bounded variation property of these solutions and the fact
that singularities lie in a set with zero Lebesgue measure. 
One can look in \cite{Diperna} (Glimm scheme) and \cite{Bressan}, 
\cite{Holden-Risebro} (WFT) for the construction and 
properties of these solutions.
\medskip 

\noindent -- A direct comparison in a multi-dimensional case was impossible 
by the methods used in this paper because of the lack of the above classical 
solutions properties.  If a bounded multi-dimensional
solution has a zero measure singularity set and has bounded variation 
in each direction for a fixed time variable, 
then the comparison could be possible 
by extending a one-dimensional proof. Some of the  interesting problems
are described in \cite{DLS2010} and \cite{CK2014}.
\medskip

Consider an $n$-dimensional system of hyperbolic conservation laws 
\begin{equation} \label{claws}
U_{t}+\nabla \cdot F(U) = 0 \text{ in } \mathbb{R}_{+}^{m}:=\mathbb{R} ^{m}
\times  (0,\infty ), \;  U(x,0)=U_{0}(x), 
\end{equation}
where the initial data $U_{0}$ is constant out of some compact set 
in $D_{0}\subset \mathbb{R}^{m}$.
Denote by $B_{T}$ a closed subset of $\mathbb{R}^{n}\times [0,T]$, for some $T>0$,
containing the range of $U$. 
This will be referred to as the domain in the sequel. 
Assume $F\in C^{1}(B_{T})$ and that there exists at least one  
mathematical entropy strictly convex function $\eta \in C^{2}(B_{T})$.
The crucial assumption is that we can choose $B_{T}$ such that 
the FPSP holds. A typical case is 
when $B_{T}$ is bounded because the existence of convex $\eta$ implies 
the FPSP. The ideal situation is when $\eta$ represents some physical quantity
related to the system like the physical entropy with a reversed sign, or energy, 
for example. We fix the function $\eta$ together with the system (\ref{claws})
in the rest of the paper, together with the Courant--Friedrichs--Lewy constant 
\[
\begin{split}
C_{\rm{cfl}}=C_{\rm{cfl}}(B_{T}) < \min\Big\{ & 1,   
\inf_{\omega \in S^{m-1}}\{(|\lambda_{i}(\omega,U)|)^{-1}, \;  \lambda_{i}
\text{ are the eigenvalues} \\ & \text{of system (\ref{claws})},\;   i=1,\ldots,m\}, \; U\in B_{T} \Big\}.
\end{split}
\]
Then we are sure that $U$ is a constant out of the set 
$$
D_{T}=\{(x\pm \omega \frac{t}{C_{\rm cfl}},\; \omega \in S^{m-1},\; x\in D_{0},\; 
t\in[0,T]\}. 
$$
 
A vector valued function $U\in (L_{\rm{loc}}^{1}(\mathbb{R}_{+}^{d}))^{n}$ 
is said to be a weak 
solution to the system (\ref{claws}) if for every function 
$\phi \in C_{0}^{\infty}(\mathbb{R}^{d}\times (-\infty,\infty ))$ 
we have
\begin{equation} \label{sol}
\int^{\infty }_{0}\int_{\mathbb{R} ^{d}}\left( U\varphi_{t}+F\left( U\right) 
\cdot \nabla \varphi \right) \,dx \,dt+\int_{\mathbb{R} ^{d}}U_{0}
\left( x\right) \varphi \left( x,0\right) \,dx=0.
\end{equation}

The main goal of this paper is to find a numerical procedure for finding a weak
solution to ({\ref{claws}) that minimizes an infinitesimal increase of the functional  
\[
\mathcal{E}(U)(t)=\int_{(x,t)\in D_{T}}\eta(U(x,t))\, dx
\] 
in the sense of Dafermos (\cite{CD_1973}). More precisely, we use a slightly different condition from \cite{F2014}.
 
\begin{definition} \label{def-dafermos}
A weak solution $U$ to system (\ref{claws}) satisfies the energy
admissibility condition if for any other weak solution
$\bar{U}$ such that for some $\tau\in [0,T)$ we have
$U(\cdot,t)=\bar{U}(\cdot,t)$, $t\leq \tau$ and 
there exists a sequence $\{\tau_{i} \}_{i \in \mathbb{N}}$, $\tau_{i}>\tau$,
$\tau_{i}\to \tau$, as $n\to \infty$ such that
\[
\begin{split}
& \mathcal{E}(\bar{U})(\tau +)\geq \mathcal{E}(U)(\tau +), \\ 
& \text{where }
\mathcal{E}(U)(\tau +)=\mathop{\rm esslim}_{t \to \tau, t>\tau}\mathcal{E}(U)(t)
\text{ in the distributional sense}.
\end{split}
\]
\end{definition}

\section{Approximate solution}

In this section, we shall define the specific FVM algorithm
and use the Lax-Wendorff theorem. 
The notation and assertions used here are from 
\cite{jegdic}.
Consider the following  partition of $S\subset \mathbb{R}_{+}^{m}$:
\[
\Delta:=\{ \Omega_{i}\times [ t_{j},t_{j+1}) :\; i\in I,\; j\in \mathbb{N}_{0}\},
\; S=\bigcup_{i\in I}\Omega _{i},
\]
where $I$ is a  finite set of indices. The procedure  
goes as follows. 
Define 
$U_{i}^{0}=\dfrac{1}{|\Omega_{i}|} \int_{\Omega_{i}} U_{0}(x) \, dx$, $i\in I$.
Put $U_{0\Delta}(x):=U_{i}^{0},\; x\in \Omega_{i}$ for the initial data, and 
let $U_{\Delta}(x,t):=U_{i}^{j} \; x\in \Omega_{i},\; t\in [t_{j},t_{j+1})$ 
is a piecewise constant approximation of the solution. 
The values $(U_{i}^{j})_{i\in I}$ for $j=1,2\ldots$ are updated according to 
\begin{equation} \label{shceme}
U_{i}^{j+1}=U_{i}^{j}-\dfrac{\Delta t}{|\Omega _{i}| }
\sum_{k\in K_{i}}\int_{S_{i,k}}h_{F\cdot \nu_{i,k}}(U_{i}^{j},U_{k}^{j})\, dS,
\end{equation}
where $K_{i}$ denotes the set of cell indices $k$ with $\Omega_{k}$ adjacent to 
$\Omega_{i}$, $|\Omega_{i}|$ is the volume of cell $\Omega_{i}$, $S_{i,k}$ is 
the surface between cells $\Omega_{i}$ and $\Omega_{k}$, $\nu_{i,k}$ 
is the outward unit normal to $\Omega_{i}$ along $S_{i,k}$, 
and $\Delta t=t_{j+1}-t_{j}$.

\begin{remark}
The simplest choice is equidistant rectangular mesh
$\Omega_{i}=[x_{i_{1}},x_{i_{1}+1}:=x_{i_{1}}+\Delta x]\times \ldots \times
[x_{i_{d}},x_{i_{d}+1}:=x_{i_{d}}+\Delta x]$, 
with the condition $\dfrac{\Delta t}{\Delta x}\leq C_{\rm{cfl}}$,
and $\nu_{i,k}$ being constant along the side $S_{i,k}$. Thus,
\[
\int_{S_{i,k}}h_{F\cdot \nu_{i,k}}\left( U_{i},U_{k}\right) = 
|S_{i,k}| h_{F\cdot n_{i,k}}\left( U_{i},U_{k}\right). 
\]
That should suffice for
regular enough domains. One can always use a unstructued FVM mesh with
some more calculations for $\nu_{i,k}$.
\end{remark}

Let $\eta$ be a fixed strictly convex entropy function for 
(\ref{claws}), regular enough in $B_{T}$, and let $D_{T}$ 
be bounded region such that $U$ is constant out of it for a finite
$T$ as discussed above. We are looking for a piecewise
constant approximation $U_{\Delta}:D_{T}\to B_{T}$ of a weak solution 
assuming that $D_{T}$ is big enough for physically relevant 
solutions with the given initial data. 
This requires adding ghost cells in numerical calculations, 
see \cite{GR} or \cite{leveque}.
\begin{figure}[h] 
	\label{fig1}
\begin{tikzpicture}[scale=0.8]
\begin{pgfonlayer}{nodelayer}
\node (0) at (-10, 21) {};
\node  (1) at (-6, 21) {};
\node  (2) at (-12, 21) {};
\node  (3) at (-4, 21) {};
\node  (4) at (-14, 21) {};
\node  (5) at (-2, 21) {};
\node  (6) at (-14, 26) {};
\node  (7) at (-12, 26) {};
\node  (8) at (-4, 26) {};
\node  (9) at (-2, 26) {};
\node  (10) at (-12, 20.5) {$x=-L$};
\node  (11) at (-4, 20.5) {$x=L$};
\node  (12) at (-10, 20) {};
\node  (13) at (-6, 20) {};
\node  (14) at (-15, 26) {};
\node  (15) at (-15, 26) {$t=T$};
\node  (16) at (-14, 24) {$U=U_{l}$};
\node  (17) at (-2, 24) {$U=U_{r}$};
\node  (18) at (-8.25, 24.5) {};
\node  (19) at (-8.25, 24.5) {$D_{T}$ (black and red border lines)};
\node  (20) at (-11.25, 22) {$U=U_{l}$};
\node  (21) at (-4.75, 22) {$U=U_{r}$};
\node  (22) at (-10, 19.5) {$x=-L+T/C_{\rm{cfl}}$};
\node  (23) at (-6, 19.5) {$x=L-T/C_{\rm{cfl}}$};
\end{pgfonlayer}
\begin{pgfonlayer}{edgelayer}
\draw [very thick, red] (0.center) to (1.center);
\draw [very thick, black] (2.center) to (0.center);
\draw [very thick, black] (1.center) to (3.center);
\draw [very thin, gray] (4.center) to (2.center);
\draw [very thin, gray] (3.center) to (5.center);
\draw [very thin,gray] (6.center) to (7.center);
\draw [very thin,gray] (8.center) to (9.center);
\draw [style=blue] (1.center) to (8.center);
\draw [style=blue] (0.center) to (7.center);
\draw [very thick, red] (7.center) to (8.center);
\draw [very thick, black] (3.center) to (8.center);
\draw [very thick, black] (2.center) to (7.center);
\draw [thin, ->] (12.center) to (0.center);
\draw [thin, ->] (13.center) to (1.center);
\end{pgfonlayer}
\end{tikzpicture}
\vspace*{-0.4cm}
\caption{\small The region $D_{T}$ in 1-dim case. Non-constant region is 
between the red and blue lines. The ghost cells are in the regions bounded by black and blue lines.}
\end{figure}
\begin{definition} \label{defscheme}
For a given initial data $U_{0\Delta}$ we construct approximate 
(i.e.\ numerically obtained) solution 
$U_{\Delta}$ in $D_{T}$ by (\ref{shceme}) where
\begin{equation} \label{nflux}
h_{F\cdot \nu_{i,k}}(U_{i}^{j},U_{k}^{j})
=(\theta_{i,k}^{j} \cdot \underline{F}_{i,k}^{j}+(1-\theta_{i,k}^{j})\cdot
\overline{F}_{i,k}^{j})\cdot \nu_{i,k}, 
\end{equation}
where 
\[
\begin{split}
\underline{F}_{i,k}^{j}&:=\min\{F(U):\; U\in [\min\{ U_{i}^{j},
U_{k}^{j}\},\max\{U_{i}^{j}, U_{k}^{j}\}]\}\\
\overline{F}_{i,k}^{j}&:=\max\{F(U):\; U\in [\min\{ U_{i}^{j},
U_{k}^{j}\},\max\{U_{i}^{j}, U_{k}^{j}\}]\},
\\ & \theta_{i,k}^{j}=\theta_{k,i}^{j}.
\end{split}
\] 	
For each $j\in \mathbb{N}_{0}$, $i,k\in I$, 
$\theta_{i,k}^{j}$ is $n$-dimensional vector with values in $[0,1]$, 
$1$ denotes the $n$-dimensional vector with each component equals $1$, 
and ``$\cdot$'' is the usual scalar product. 
The functions $\min$ and $\max$ are used component-wise.
	
The elements $\{\theta_{i,k}^{j}\}_{i,k\in I}\in [0,1]^{n}$ are chosen such that 
\[\eta^{j+1}(U_{\Delta}):=\int_{\Omega_{T}}\eta(U_{\Delta}(x,t_{j+1}))\,dx\] 
is minimal for each time step $j$.
\end{definition}
\begin{remark}

$U_{i}^{j+1}$ is the affine function of $\Theta^{j}
=\big( \theta_{i,k}^{j}\big)_{i,k\in I}$ (rearranged to be a vector), so
\[
\eta^{j+1}(U_{\Delta})=\int_{\Omega_{T}}\eta(U_{\Delta}(x,t_{j+1}))\, dx
=\dfrac{1}{|\Omega_{i}|}\sum_{i\in I} \int_{\Omega_{i}}\eta(U_{i}^{j+1})\, dx
\]
is convex with respect to $\Theta^{j}$. For every $j$, 
we have a unique minimum, but may
be obtained by multiple vectors $\Theta^{j}$. However, the restriction of a 
strictly convex function to an affine subspace is still strictly convex. 
A strictly convex function can have at most one minimizer in its domain.
Therefore, if a minimum is attained, the vector $U_{i}^{j+1}$ that achieves it 
is unique, although infinitely many $\Theta^{j}$ may map to it.
Thus, scheme (\ref{shceme}) is well defined providing that 
$\big(U_{i}^{j+1}\big)_{i\in I}$ belongs to $B_{T}$ and satisfies the affine constraints.

If we have some affine constraints on the variables, like non-negativity of a solution component,
this also creates affine constraints on $\Theta ^{j}$ in the optimization algorithm. Therefore, uniqueness is preserved.
\end{remark}

\begin{remark} \label{rem2}
The scheme is chosen such that for each $U_{i,k}^{j}$ lying componentwisely
between 
$U_{i}^{j}$ and $U_{k}^{j}$ ($U_{i,k}^{j}\in [\min\{ U_{i}^{j},
U_{k}^{j}\},\max\{U_{i}^{j},U_{k}^{j}\}]$)
for every $i,k\in I$, $F(U_{i,k})\cdot \nu_{i,k}=h_{F\cdot \nu_{i,k}}(U_{i}^{j},U_{k}^{j})$ for some $\theta_{i,k}^{j} \in [0,1]^{n}$.
We will use this fact to compare the solution from Theorem \ref{lw} bellow
with the classical solutions in the one-dimensional case obtained by 
the Glimm or WFT scheme. 

Note that the opposite is not true: it is not always possible to find $U_{i,k}$
described above for each choice of $(\theta_{i,k}^{j})_{i,k\in I} \in [0,1]^{n}$.

If one wants to have that option, one possibility is to define
\[
h_{F \cdot \nu_{i,k}} (U_{i}^{j}, U_{k}^{j}) = F(\theta _{i, k}^{j} U_{i}^{j} + (1 - \theta _{i, k}^{j}) U_{k}^{j}.
\]
But then $\eta^{j+1} (U _{\Delta})$ would not be convex with respect to
$\Theta ^{j}$. Then a minimum is maybe non-unique and a numerical 
procedure for finding the global minimum does not exist in general.

If $U_{i}^{j}=U_{k}^{j}$, then $\theta_{i,k}^{j}$ can by anything, that is, 
$F(U_{i}^{j})\cdot \nu_{i,k}=F(U_{i}^{j})\cdot \nu_{i,k}=h_{F\cdot 
\nu_{i,k}}(U_{i}^{j},U_{k}^{j})$. (Then, $U_{i,k}=U_{i}^{j}=U_{k}^{j}$.)
\end{remark}

Taking all that into account, one can see that the proposed scheme,
called MES scheme in the sequel, is consistent 
and conservative. In addition, $h_{F\cdot \nu_{i,k}}$ is globally Lipschitz in 
both $U_{i}^{j}$ and $U_{k}^{j}$ in the domain. 
All that means that the assumptions of the Lax-Wendorff Theorem are satisfied, 
so we have 
\begin{theorem} \label{lw}
Let $U_{\Delta}:D_{T} \to B_{T}$, assume $B_{T}$ is bounded and 
$\eta\in \mathcal{C}^{2}(B_{T})$.
If $U_{\Delta} \to U$ boundedly almost everywhere on $D_{T}$ as $|\Delta|\to 0$, then the limit function $U$ is a weak solution of (\ref{claws}).
\end{theorem}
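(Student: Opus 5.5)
The plan is to reduce the statement to the classical Lax--Wendroff argument for conservative, consistent finite volume schemes, as in \cite{GR}, \cite{leveque} or \cite{jegdic}. The only scheme-specific work is to check the hypotheses on the numerical flux (\ref{nflux}). The choice of $\theta_{i,k}^{j}$ by entropy minimization then plays no role beyond the fact that $\theta_{i,k}^{j}\in[0,1]^{n}$ and $\theta_{i,k}^{j}=\theta_{k,i}^{j}$.

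\textbf{Step 1: properties of the flux.} First I verify three properties of $h_{F\cdot\nu_{i,k}}$.
\begin{itemize}
\item \emph{Conservation.} Since $\nu_{k,i}=-\nu_{i,k}$, $\theta_{i,k}^{j}=\theta_{k,i}^{j}$, and the componentwise boxes defining $\underline F_{i,k}^{j},\overline F_{i,k}^{j}$ are symmetric in $(i,k)$, we get $h_{F\cdot\nu_{k,i}}(U_k^j,U_i^j)=-h_{F\cdot\nu_{i,k}}(U_i^j,U_k^j)$. So interior fluxes cancel when (\ref{shceme}) is summed over cells.
\item \emph{Consistency.} If $U_i^j=U_k^j=V$, the box reduces to $\{V\}$, so $\underline F=\overline F=F(V)$ and $h=F(V)\cdot\nu_{i,k}$ for every $\theta$ (cf.\ Remark \ref{rem2}).
\item \emph{Lipschitz-type bound.} For any $\theta\in[0,1]^n$,
\[
|h_{F\cdot\nu_{i,k}}(U_i^j,U_k^j)-F(U_i^j)\cdot\nu_{i,k}|\le |\overline F_{i,k}^{j}-\underline F_{i,k}^{j}|\le C\,|U_i^j-U_k^j|,
\]
where $C$ is the Lipschitz constant of $F\in C^1$ on the bounded set $B_T$. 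This holds because $F(U_i^j)$ lies between $\underline F$ and $\overline F$ componentwise, and the oscillation of $F$ over the box is bounded by $C$ times its diameter. This estimate, uniform in the $\theta$'s, is what replaces the usual Lipschitz continuity of $h$ in the Lax--Wendroff proof.
\end{itemize}

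\textbf{Step 2: the weak formulation.} Fix $\varphi\in C_0^\infty$ and take the cell averages $\varphi_i^j$. Multiply (\ref{shceme}) by $|\Omega_i|\varphi_i^j$, sum over $i$ and $j$, and perform summation by parts in $j$ and in $i$ using conservation. This yields
\[
\sum_{j,i}|\Omega_i|U_i^{j+1}(\varphi_i^{j+1}-\varphi_i^j)+\sum_i|\Omega_i|U_i^0\varphi_i^0+\Delta t\sum_j\sum_{\text{faces}}|S_{i,k}|\,h_{F\cdot\nu_{i,k}}(U_i^j,U_k^j)\,(\varphi_k^j-\varphi_i^j)=0.
\]
Ghost cells and the FPSP ensure $U_\Delta$ is constant near $\partial D_T$, so no boundary terms arise on the support of $\varphi$.

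\textbf{Step 3: passage to the limit.} The first two sums converge to $\int\!\!\int U\varphi_t$ and $\int U_0\varphi(\cdot,0)$ by bounded a.e.\ convergence and dominated convergence. In the flux term I replace $h$ by $F(U_i^j)\cdot\nu_{i,k}$. That sum tends to $\int\!\!\int F(U)\cdot\nabla\varphi$ on a regular mesh, using $\sum_k|S_{i,k}|\nu_{i,k}(\varphi_k-\varphi_i)\approx|\Omega_i|\nabla\varphi$ and the divergence theorem; bounded a.e.\ convergence and continuity of $F$ then handle the limit.

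\textbf{Main obstacle.} The hard part is controlling the error from this replacement. By Step 1 it is bounded by
\[
C\|\nabla\varphi\|_\infty\,\Delta t\,\Delta x\sum_j\sum_{\text{faces in supp}\,\varphi}|S_{i,k}|\,|U_i^j-U_k^j|.
\]
I would not try to use a BV bound here. Instead I would follow the original Lax--Wendroff argument. Bounded a.e.\ convergence of $U_\Delta$ implies $\|U_\Delta(\cdot+\Delta x\,e,\cdot)-U_\Delta\|_{L^1(\mathrm{supp}\,\varphi)}\to0$, since $L^1$ convergence gives equicontinuity of translates by the Kolmogorov--Riesz theorem. The expression above is exactly $\|\nabla\varphi\|_\infty$ times such a shifted $L^1$ difference, so it tends to $0$. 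Mesh regularity, i.e.\ bounded ratios of $|S_{i,k}|\Delta x$ to $|\Omega_i|$, is needed to identify the sum with this integral.
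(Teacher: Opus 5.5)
Your proposal is correct and follows essentially the paper's route. The paper's entire argument is the remark preceding the theorem: the scheme is conservative and consistent, $h_{F\cdot\nu_{i,k}}$ is globally Lipschitz, and so the Lax--Wendroff theorem applies; you carry that argument out explicitly, and your version is the more careful one. The $\theta_{i,k}^j$ come from a global minimization, so $h$ is not a single fixed two-point flux as in the textbook statement, and what is actually needed is your estimate $|h_{F\cdot\nu_{i,k}}(U_i^j,U_k^j)-F(U_i^j)\cdot\nu_{i,k}|\le C|U_i^j-U_k^j|$, uniform in $\theta_{i,k}^j$. That estimate tacitly needs the box $[\min\{U_i^j,U_k^j\},\max\{U_i^j,U_k^j\}]$ to lie where $F$ is $C^1$ (e.g.\ $B_T$ convex), and, like the paper, you effectively work on a uniform rectangular mesh.
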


The relation $|\Delta|\to 0$ means $\Delta x \to 0$ with 
$\Delta t < \Delta x \cdot C_{\rm{cfl}}$.
Let us call the limiting function $U$ from Theorem \ref{lw} the 
Minimal Entropy Solution (MES solution).
Note that the boundedness of $B_{T}$ implies that there exists a subsequence 
of $U_{\Delta}$ converging in the space of signed Radon measures $\mathcal{M}(D_{T})$.  However, this limit may not satisfy (\ref{sol}) in the usual sense.

\subsection{Equivalence of weak solution and integral representation definitions}
	
In 1D case, $U\in L^{1}_{loc}(\mathbb{R}_{+}\times \mathbb{R})$ is a weak solution 
(sometimes called ``distributional'') if for every test function 
$\varphi \in C^{\infty}(\mathbb{R}\times \mathbb{R})$, (\ref{sol}) holds true.
This is equivalent to the following relation (see \cite{CM} or \cite{CD2005}):

\noindent
{\bf The integral form} in 1D case says that 
\begin{equation} \label{if}
\int ^{x_{2}}_{x_{1}}U( x,t_{2}) -U( x,t_{1}) \, dx+\int ^{t_{2}}_{t_{1}}
F( U( x_{2},t) ) -F( U( x_{1},t) ) \, dt=0,
\end{equation}
for every rectangle $[x_{1},x_{2}] \times [t_{1},t_{2}]$.
In the multi-dimensional case
\[
\int _{W}U(x,t_{2}) dV-\int _{W}U(x,t_{1}) dV=-\int_{t_{1}}^{t_{2}}\int _{\partial W}{F}( U( x,t) ) \cdot \nu \, dS\, dt,
\]
has to be true for every $0\leq t_{1} < t_{2}$ and $W$ being any 
$m$-dim rectangle or some other volume element with 
a sufficiently regular boundary.

One can look at the first chapter of the book \cite{CD2005} for much more 
detailed analysis of the mathematical foundations of balance laws.

\section{MES in 1D}
Let us fix $L>0$ such that $D_{T}\subset [-L,L]\times [0,T]$. 
Suppose $\widehat{U}$ is a bounded classical weak solution to (\ref{claws}) 
for $m=1$ of finite bounded variation
for each $t$ (obtained by the Glimm or WFT scheme as we have written above), 
$\widehat{U}\in L^{\infty}(D_{T}) \cap BV (D_{t})$, 
$D_{t}=\{(x,t)\in D_{T}, \; t\in [0,T)\}$.
For both of them,  $\mathop{\rm TV}(\widehat{U}(x,t))\leq {\rm const}
\mathop{\rm TV}(U_{0})$. 

The scheme from Definition \ref{defscheme} can be written in the simpler form
for 1D case with $\Omega _{i} = \left[ x _{i - \frac{1}{2}} , x _{i + \frac{1}{2}} \right]$,
\begin{equation} \label{1dscheme}
\begin{split}
U_{i}^{j+1} =  & U_{i}^{j}+\Big( \theta_{i}^{j}\underline{F}_{i-1/2}^{j}
+(1-\theta_{i}^{j})\overline{F}_{i-1/2}^{j}) \\
& -\big(\theta_{i+1}^{j}\underline{F}_{i+1/2}^{j}
+(1-\theta_{i+1}^{j})\overline{F}_{i+1/2}^{j}\big) \Big) \frac{\Delta t}{\Delta x}\\
& \underline{F}_{i-1/2}^{j}=\min\{F(U):\; 
U\in [\min\{U_{i-1},U_{i}\},\max\{U_{i-1},U_{i}\}]\} \\
& \overline{F}_{i-1/2}^{j}=\max\{F(U):\; 
U\in [\min\{U_{i-1},U_{i}\},\max\{U_{i-1},U_{i}\}]\},\\ 
& i\in I,\; j\in \mathbb{N}.
\end{split}
\end{equation}
All minimums, maximums, and relation $\in$ are taken componentwisely, while $[\cdot,\cdot]$
denotes an $n$-dimensional interval.

In the following assertion, we will show how to approximate a classical 
solution using the MES scheme.

\begin{lemma} \label{lemma3.1}
Let $\widetilde{U}$ be the piecewise constant approximation of the classical weak solution
$\widehat{U}$ defined on the equidistant mesh 
$\{ (x_{i-1/2},t_{j})\}_{i\in I, j\in \mathbb{N}}$, $\Delta x=x_{i+1/2}-x_{i-1/2}$, 
$\Delta t=t_{j+1}-t_{j}$ using (\ref{if}):
\[
\begin{split}
& \widetilde{U}(x,t)=\widehat{U}_{i}^{j},\; (x,t) \in (x_{i-1/2},x_{i+1/2})\times [t_{j},t_{j+1}) \\
& F(\widetilde{U}(x_{i+1/2},t))=\widehat{F}_{i+1/2,j},\; t\in [t_{j},t_{j+1}). 
\end{split} 
\]
where 
\begin{equation}\label{kappa} 
\begin{split}
& \widehat{U}_{i}^{j}:= \frac{1}{x_{i+1}-x_{i}}\int_{x_{i-1/2}}^{x_{i+1/2}} \widehat{U}(x,t_{j}) \, dx \\
& \widehat{F}_{i+1/2}^{j}:=\frac{1}{t_{j+1}-t_{j}}\int ^{t_{j+1}}_{t_{j}}F(\widehat{U}(x_{i+1/2},t)) \, dt.
\end{split}
\end{equation}

Then for every $j\in \mathbb{N}_{0}$ 
there exists a fine enough mesh and a FVM approximate solution 
$\breve{U}$ defined at the same mesh and satisfying (\ref{1dscheme}) such that 
\begin{equation}\label{s}
\sum_{i\in I}|\eta(\breve{U}_{i}^{j+1})-\eta(\widetilde{U}_{i}^{j+1})|\Delta x
\end{equation}
is arbitrary small.
\end{lemma}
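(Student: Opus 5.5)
The plan is to build $\breve U$ one time step at a time, starting from the cell averages of $\widehat U$ and reusing the integral form (\ref{if}). At each interface the time-averaged classical flux will be represented as a MES flux (\ref{nflux}) for a suitable $\theta$, so that the update (\ref{1dscheme}) reproduces the exact cell averages $\widehat U_i^{j+1}$ up to a small error. The smallness of (\ref{s}) will then follow from the Lipschitz continuity of $\eta$ on the bounded set $B_T$.

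\emph{Step 1.} Apply (\ref{if}) on the rectangle $[x_{i-1/2},x_{i+1/2}]\times[t_j,t_{j+1}]$. This gives the exact identity
\[
\widehat U_i^{j+1}=\widehat U_i^{j}-\frac{\Delta t}{\Delta x}\bigl(\widehat F_{i+1/2}^{j}-\widehat F_{i-1/2}^{j}\bigr).
\]
So the averages of the classical solution already satisfy a conservative scheme with numerical flux $\widehat F_{i+1/2}^j$. It therefore suffices to show that, up to an error $\varepsilon_{i+1/2}$, each $\widehat F^j_{i+1/2}$ lies componentwise in $[\underline F^j_{i+1/2},\overline F^j_{i+1/2}]$, with these bounds computed from the neighbouring cell values.

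Once that holds, each component of $\widehat F^j_{i+1/2}$ is a convex combination of the corresponding components of $\underline F^j_{i+1/2}$ and $\overline F^j_{i+1/2}$. This defines $\theta^j_{i+1/2}\in[0,1]^n$, and Remark \ref{rem2} handles the degenerate case $\widehat U_i^j=\widehat U_{i+1}^j$. We then set $\breve U^j_i:=\widehat U^j_i$ and let $\breve U^{j+1}$ be given by (\ref{1dscheme}) with these $\theta$'s.

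\emph{Step 2.} We need to justify this containment, and this is the main obstacle. If no wave crosses the interface during $[t_j,t_{j+1})$, then by finite propagation speed and the CFL condition $\widehat U(x_{i+1/2},t)$ takes values in the range of $\widehat U(\cdot,t_j)$ on the two adjacent cells. Heuristically this range is close to the box $[\min\{\widehat U_i^j,\widehat U_{i+1}^j\},\max\{\widehat U_i^j,\widehat U_{i+1}^j\}]$, but only up to the oscillation of $\widehat U(\cdot,t_j)$ over those two cells.

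I would use the BV bound $\mathop{\rm TV}(\widehat U(\cdot,t))\le C\,\mathop{\rm TV}(U_0)$ to control this. The number of interfaces where the oscillation over adjacent cells exceeds $\delta$ is at most $C\,\mathop{\rm TV}(U_0)/\delta$. At all other interfaces, $\widehat F^j_{i+1/2}$ is within $\omega_F(\delta)$ of the box $[\underline F,\overline F]$, where $\omega_F$ is the modulus of continuity of $F$ on $B_T$. Projecting onto the box costs at most that amount.

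At the few bad interfaces we still take the nearest admissible MES flux. The resulting flux error is bounded by $2\sup_{B_T}|F|$, and there are at most $C/\delta$ such interfaces. Hence the total $\ell^1$ flux error, multiplied by $\Delta t/\Delta x\le C_{\rm cfl}$ and by $\Delta x$, is at most
\[
C_{\rm cfl}\bigl(2L\,\omega_F(\delta)+C\delta^{-1}\Delta x\bigr).
\]

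\emph{Step 3.} Telescoping the flux errors over the cells gives
\[
\sum_i|\breve U_i^{j+1}-\widehat U_i^{j+1}|\Delta x\le 2\sum_{i}|\varepsilon_{i+1/2}|\,\Delta t.
\]
Since $\eta\in C^2(B_T)$ and $B_T$ is bounded, $\eta$ is Lipschitz there. This bounds (\ref{s}) by a constant times the right-hand side of the estimate in Step 2.

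Choose $\delta$ first so that $\omega_F(\delta)$ is small, and then $\Delta x$ small relative to $\delta$. This makes (\ref{s}) arbitrarily small.

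One also has to check that $\breve U^{j+1}$ stays in $B_T$. This follows because it is close to $\widehat U^{j+1}\in B_T$, possibly after slightly enlarging $B_T$ or noting that the construction is a convex-combination update under the CFL condition. The ghost cells are trivial, since there $\widehat U$ is constant.
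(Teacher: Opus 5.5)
\paragraph{The gap in Step~2.} The claim that, by finite propagation speed and the CFL condition, $\widehat U(x_{i+1/2},t)$ takes values in the range of $\widehat U(\cdot,t_j)$ on the two adjacent cells is a maximum principle, and systems do not have one. Already for Riemann data the interface value can be the middle state, which generally lies outside the componentwise box spanned by $U_L,U_R$. For the isentropic gas system of the paper's example, with $\rho_L=\rho_R$ and $u_L>u_R$, the middle density exceeds $\rho_L$. Finite propagation speed only tells you which data $\widehat U(x_{i+1/2},t)$ depends on.

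Your assertion that $\widehat F^j_{i+1/2}$ is within $\omega_F(\delta)$ of the box at every small-oscillation interface (where waves generally do cross) rests entirely on this claim. The global bound $\mathop{\rm TV}(\widehat U(\cdot,t))\le C\mathop{\rm TV}(U_0)$ does not supply it. What you need is a \emph{localized} estimate on domains of dependence: Glimm's local total variation bound
\[
\mathop{\rm TV}\bigl(\widehat U(\cdot,t);[a+\hat\lambda(t-s),\,b-\hat\lambda(t-s)]\bigr)\le C\,\mathop{\rm TV}\bigl(\widehat U(\cdot,s);[a,b]\bigr),\qquad t\ge s,
\]
which holds for Glimm/WFT solutions of small variation, with $\hat\lambda$ a bound on the wave speeds. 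Take $[a,b]=[x_{i-1/2},x_{i+3/2}]$ and $s=t_j$. Then use (\ref{if}) on $J_t\times[t_j,t]$, where $J_t$ is the shrunken interval, to compare the averages of $\widehat U(\cdot,t)$ and $\widehat U(\cdot,t_j)$ over $J_t$. The length of $J_t$ stays comparable to $\Delta x$ because the CFL inequality is strict. Together these give $|\widehat U(x_{i+1/2},t)-\widehat U^j_i|\le C\delta$ for $t\in[t_j,t_{j+1}]$ whenever $\mathop{\rm TV}(\widehat U(\cdot,t_j);[x_{i-1/2},x_{i+3/2}])\le\delta$. Classify interfaces by this local variation rather than by oscillation; their number is still $O(\mathop{\rm TV}(U_0)/\delta)$. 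Your Steps~2--3 then go through with $\omega_F(C\delta)$ in place of $\omega_F(\delta)$.

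\paragraph{A smaller point, and comparison with the paper.} Your justification that $\breve U^{j+1}\in B_T$ does not work as stated. Closeness to $\widehat U^{j+1}$ holds only in $\ell^1$. At the bad cells the pointwise deviation can be of order $C_{\rm cfl}\sup_{B_T}|F|$, and for systems the update is not a convex combination of states. So $F$ and $\eta$ must be defined on a fixed neighbourhood of $B_T$; the paper's proof leaves this open too.

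Once patched, your route differs from the paper's. The paper discards cells containing a discontinuity or an extremum of some component, asserting that they have small total measure. On the remaining cells $\widehat U$ is smooth and componentwise monotone. There it takes $\breve F^j_{i-1/2}=F(\widehat U(x_{i-1/2},t_j))$, which lies exactly in the MES box by monotonicity, and uses smoothness in $t$ to get a flux error $O(\Delta t)$. Your counting of high-variation interfaces is a sounder substitute for the small-measure assertion, which fails when the jump set is dense (BV allows this). It also needs no monotonicity or $C^1$ bounds. The cost is a one-step error $O(\omega_F(C\delta)+\Delta x/\delta)$ rather than the paper's $O(\Delta t)$ on regular cells, which is still enough for the one-step statement of the lemma.
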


\begin{proof}
The piecewise constant function $\widetilde{U}$ converge to 
the solution $\widehat{U}$ in 
distributional sense (in $L^{1}(D_{T})$ also) since $B_{T}$ 
is bounded by the definition of classical solutions. 

For every $t\in[0,T)$, a bounded variation function is
differentiable almost everywhere (see \cite{RoFi}, for example). 
Thus, the function $\widehat{U}$ has countably many discontinuities
for each line $t=t_{j}$. Also it has at most countably many discontinuity lines
as one can see in \cite{Diperna}
for the Glimm and \cite{Bressan} for the WFT scheme.
That is, the union of discontinuity lines may be covered by a subset of rectangles
in the union $\bigcup_{i,j \in \mathbb{N}} [x_{i-1/2},x_{i+1/2}] 
\times [t_{j},t_{j+1}]$, where 
each rectangle has arbitrary small volume 
$\Delta x \cdot \Delta t$. 
Denote by $I_{\rm nd}^{j}\subset I$ a set of indices such that  
$k\in I_{\rm nd}^{j}$ if $\Omega_{k}$ contains a discontinuity. Since a set of discontinuities
is at most countable, its Lebesgue measure equals zero, 
and there is a fine enough mesh such that 
$\sum_{k\in I_{\rm nd}^{j}}|\Omega_{k}| < \varepsilon$, 
for every  $\varepsilon>0$. 

In the strip $D_{[t_{j},t_{j+1}]}:=\{ (x,t)\in D_{T}:\; t\in [t_{j},t_{j+1}]\}$ 
the function $\widehat{U}$ is continuously
differentiable out of the cells with indices in $I_{\rm nd}^{j}$ and
$I_{\rm nd}^{j+1}$. 

Finally, let us denote by $I_{\rm m}^{j}\subset I \setminus I_{\rm nd}^{j} 
\setminus I_{\rm nd}^{j+1}$ a set of indices 
$i\in I$ such that some component of $\widehat{U}(\cdot,t_{j})$ reaches 
a maximum or minimum in the cell  
$[x_{i-1/2},x_{i+1/2}]$ at $t=t_{j}$, 
i.e.\ each component of $\widehat{U}(\cdot,t_{j})$ is monotone 
for $i \not\in I_{m}$. Again, this set $I_{m}^{j}$ 
of cells with extreme points of $\widehat{U}(\cdot,t)$ for each $t$ 
is at most countable due to the BV property of the solution in one space 
dimension. Thus, one can refine the above mesh such that
$\sum_{k\in I_{\rm m}^{j}}|\Omega_{k}| < \varepsilon$.

We shall construct a new approximate solution for $t=t_{j+1}$ 
and for all the cells. 
The non-negligible cells are the ones with indices in 
$I_{0}:=I \setminus I_{\rm nd}^{j} \setminus I_{\rm nd}^{j+1} 
\setminus I_{\rm m}^{j}$ where all the components on the solution are 
smooth and monotone.
We have to estimate the difference in the entropy integral
between values in $\widehat{U}(\cdot,t_{j+1})$ 
and new ones only in these cells.

Since $\hat{U}$ is smooth
in each $\Omega_{i}$, $i\in I_{0}$, and $F\in \mathcal{C}(B_{T})$ one can find value 
$\tau\in [t_{j},t_{j+1}]$ such that
$$\hat{F}_{i-1/2}^{j}=
\frac{1}{\Delta t} \int_{t_{j}}^{t_{j+1}}F(\widetilde{U}(x_{i-1/2},t))dt 
= F(\widetilde{U}(x_{i-1/2},\tau)).$$
Put
$\breve{F}_{i-1/2}^{j}:=F(\widetilde{U}(x_{i-1/2},t_{j}))\in 
[\underline{F}_{i-1/2}^{j},\overline{F}_{i-1/2}^{j}]$,
such that (\ref{1dscheme}) is satisfied: 
We have seen in Remark \ref{rem2}, that there exists a vector
$\theta_{i}^{j}\in [0,1]^{n}$ such that
\[
\breve{F}_{i-1/2}^{j}:=(\theta_{i}^{j}\underline{F}_{i-1/2}^{j}
+(1-\theta_{i}^{j})\overline{F}_{i-1/2}^{j}).
\]
The notation from (\ref{1dscheme}) means that
\[ 
\underline{F}_{i-1/2}^{j}=\min_{x\in[x_{i-1/2},x_{i+1/2}]} F(\widetilde{U}(x,t_{j})) \text{ and }
\overline{F}_{i-1/2}^{j}=\max_{x\in[x_{i-1/2},x_{i+1/2}]} F(\widetilde{U}(x,t_{j})).
\]
Each component of $\widetilde{U}(\cdot,t_{j})$ is continuous and monotone
in $[x_{i-1/2},x_{i+1/2}]$, $i\in I_{0}$, and 
$$
\Big| \hat{F}_{i-1/2}^{j} - \breve{F}_{i+1/2}^{j} \Big|
\leq \underbrace{\| \nabla_{x} F\|_{L^{\infty}}}_{=:C_{F}^{1}}(t_{j}-\tau)
\leq C_{F}^{1} \Delta t.
$$
Using the integral representation (\ref{if}) and (\ref{kappa}), we have
\[ 
\widehat{U}_{i}^{j+1}=\widehat{U}_{i}^{j}+\big( \widehat{F}_{i-1/2}^{j}-\widehat{F}_{i+1/2}^{j} \big)
\frac{\Delta t}{\Delta x}
\]
for each $i\in I$. Put
\[ 
\breve{U}_{i}^{j+1}=\widehat{U}_{i}^{j}+\big( \breve{F}_{i-1/2}^{j}-\breve{F}_{i+1/2}^{j} \big)
\frac{\Delta t}{\Delta x},
\]
Then,
\[
I:= \Big| \sum_{i\in I_{0}}\eta(\breve{U}_{i}^{j+1})\Delta x
-\sum_{i\in I_{0}}\eta(\widehat{U}_{i}^{j+1})\Delta x
\Big| \leq C_{\eta}^{1} \sum_{i\in I_{0}} 
| \breve{U}_{i}^{j+1} - \widehat{U}_{i}^{j+1} | \Delta x,
\]
where $C_{\eta}^{1}:=\sup_{U\in B_{T}}\|D\eta (U)\|_{L^{\infty}}$.
Substituting the above values for $\breve{U}_{i}^{j+1}$ and $\widehat{U}_{i}^{j+1}$, we have
\[
| \breve{U}_{i}^{j+1} - \widehat{U}_{i}^{j+1} | \leq  
\big( \big|\widehat{F}_{i+1/2}^{j}-\breve{F}_{i+1/2}^{j}\big| 
+\big|\widehat{F}_{i-1/2}^{j}-\breve{F}_{i-1/2}^{j}\big|\big) 
\frac{\Delta t}{\Delta x} \leq 2\frac{\Delta t}{\Delta x}C_{F}^{1}\Delta t.
\]
Assuming that $\frac{\Delta t}{\Delta x} \leq C_{\rm cfl}<1$, 
\[
I \leq 2C_{\eta}^{1} C_{F}^{1} \Big( \sum_{i\in I_{0}}\Delta x\Big) 
\Delta t \leq 4 L C_{\eta}^{1} C_{F}^{1} \Delta t. 
\] 
Thus, $I<\varepsilon$ for $\Delta t$ small enough.
The sum (\ref{s}) stays $\mathcal{O}(\varepsilon)$ since 
$\eta$ is regular enough and 
$\sum_{i\in I_{\rm nd}^{j} \cup I_{\rm nd}^{j-1} \cup I_{\rm m}^{j}}
|\Omega_{i}|\leq \mathop{\rm const} \varepsilon$.
Finally, note that $\Delta t$ does not depend on $j\in \mathbb{N}_{0}$, 
due to the assumption that the flux, entropy functions are regular enough,
and solutions take their values in an appropriate set $B_{T}$.
\end{proof}

\begin{remark}
With the above assumptions and notation, we prove the following additional property:
For every $\varepsilon$ and a fixed $C_{\rm cfl}$, there exists $\mu$ such
that for $\Delta t<\mu$ we have
\begin{equation} \label{hatlinf}
\sum_{i\in I\setminus I_{\rm nd}^{j}\setminus I_{\rm nd}^{j+1}
\setminus I_{\rm m}^{j}}
\| \widehat{U}(x,t_{j}) - \widehat{U}_{i}^{j}\|_{L^{\infty}(D_{T})}{\Delta x} 
< \varepsilon.
\end{equation}
The above relation means that the approximation of a weak 
solution by excluding all ``irregular'' cells does not change 
t significantly. Therefore, the MES satisfies the Dafermos condition 
(\ref{def-dafermos}), as shown in the following theorem.
\end{remark}

\begin{remark} \label{rem4}
The numerical flux property from Remark \ref{rem2} is crucial for this proof.
It is used to define $\breve{F}_{i-1/2}$ in the above proof.
Note that the simplest choice $F_{i,k}^{j} = F \big( U _{i}^{j} \big)$,
$\overline{F} _{i , k} = F \big( U _{k}^{j} \big)$, which gives
\[
\begin{split}
U_{i}^{j+1} =  & U _{i}^{j} + \Big( F(U_{i-1}^{j})
\theta _{i}^{j} + F( U_{i}^{j}) (1 - \theta _{i}^{j}) \\ 
& - \big( F( U _{i}^{j}) \theta_{i + 1}^{j} + F( U_{i + 1}^{j})
( 1 - \theta _{i + 1}^{j} ) \big) \Big) \frac{\Delta t}{\Delta x}
\end{split}
\]
for (\ref{1dscheme}) in the 1D case does not possesses that property.
The simple example with $U=(u,v)$ where $f(u,v)$ denotes
a component of the flux is sufficient to show that. Take 
$f(u,v) = \dfrac{u}{v}$,   $u _{0} = v _{0} = 1 , \; v _{0} = v _{1} = 2$.
Then 
$f(1,1)=1$, $f ( 2 , 2 ) = 2$, and
$f ( u , v ) \in [ \frac{1}{2} , 2 ] =: D$.
We have 
$\bigcup_{\theta \in [0,1]}
\theta f ( 1 , 1 ) + ( 1 - \theta ) f ( 2, 2 )
\in [ 1 , 2 ]$ being a proper subset of $D$.
But, 
$\underline{f} = \frac{1}{2}$, $\overline{f} = 2$, and
$\bigcup_{\theta \in [0,1]}
\theta \underline{f} + (1 - \theta) \overline{f} = [ 1/2 , 2 ] \equiv D$.
\end{remark}

\begin{theorem} \label{t-2}
There is a non-trivial time interval $[0,T_{1}]$ where the limiting solution 
$U=\lim_{\Delta \to 0}U_{\Delta}$ from Theorem \ref{lw} is the one 
satisfying Definition \ref{def-dafermos} among all classical solutions with values 
in the bounded set $B_{T}$.
\end{theorem}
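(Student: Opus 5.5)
We argue by contradiction, comparing the MES scheme step by step with the discrete approximations of a classical solution supplied by Lemma \ref{lemma3.1}. Suppose $\widehat{U}$ is a classical solution (Glimm or WFT) with values in $B_{T}$ that coincides with the MES solution $U$ up to some $\tau\in[0,T_{1})$, and that for a sequence $\tau_{i}\downarrow\tau$ it dissipates strictly more entropy:
\[
\mathcal{E}(\widehat{U})(\tau+)<\mathcal{E}(U)(\tau+)-\delta
\]
for some $\delta>0$. The aim is to contradict the defining property of Definition \ref{defscheme}: at every time step the MES update minimizes $\eta^{j+1}$ over all admissible $\Theta^{j}$.

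First we fix the time window. Take $T_{1}>0$ small enough that, on $[0,T_{1}]$, the MES iterates and the comparison states $\breve{U}$ from Lemma \ref{lemma3.1} stay in $B_{T}$. Boundedness of the data and the CFL restriction give this, because each update is a convex combination of values $F$ takes on the cell intervals, scaled by $\Delta t/\Delta x$. This is where the non-trivial interval $[0,T_{1}]$ comes from.

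Next we restart the scheme at $t_{j}\approx\tau$ from the common data $\widehat{U}_{i}^{j}=U_{i}^{j}$. On the cells where they agree, the agreement is up to an $L^{1}$ error that vanishes as $|\Delta|\to0$, by the convergence $U_{\Delta}\to U$ in Theorem \ref{lw} and the $L^{1}$ convergence of $\widetilde{U}$. Lemma \ref{lemma3.1}, together with Remark \ref{rem2}, provides admissible $\theta_{i}^{j}$ whose one-step update $\breve{U}^{j+1}$ has entropy within $\varepsilon$ of $\sum_{i}\eta(\widetilde{U}_{i}^{j+1})\Delta x$. Minimality of the MES step then gives
\[
\eta^{j+1}(U_{\Delta})\le \sum_{i}\eta(\breve{U}_{i}^{j+1})\Delta x\le \sum_{i}\eta(\widetilde{U}_{i}^{j+1})\Delta x+\varepsilon.
\]
Initial discrepancies are controlled by the Lipschitz bound $C_{\eta}^{1}$ and by the $L^{1}$ stability of the affine update, which is Lipschitz in the data.

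Finally we iterate over the steps between $\tau$ and $\tau_{i}$. The comparison is only one-step: after each step the MES solution is restarted from its own minimizer, not from $\widehat{U}$. So we use that the MES step from data at a smaller entropy level combined with Lemma \ref{lemma3.1} yields errors of order $\Delta t$ per step. Summed over $(\tau_{i}-\tau)/\Delta t$ steps, these stay of order $\tau_{i}-\tau$ times a quantity vanishing with $\varepsilon$. We then pass $|\Delta|\to0$, using the estimate (\ref{hatlinf}) to discard irregular cells, and then $i\to\infty$. This yields $\mathcal{E}(U)(\tau+)\le\mathcal{E}(\widehat{U})(\tau+)$, which contradicts the assumed inequality with margin $\delta$.

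The main obstacle is this accumulation step. The MES and classical trajectories diverge after one step, so a greedy per-step minimizer does not obviously produce a smaller entropy over a finite time interval. I would first try to establish an entropy-dissipation comparison showing that the MES one-step map is monotone with respect to entropy level on $[0,T_{1}]$. If that fails, I would restrict to the infinitesimal statement of Definition \ref{def-dafermos}: only the first step after $\tau$ matters in the limit $\tau_{i}\to\tau$, with $\Delta t$ chosen of order $\tau_{i}-\tau$ and both limits taken diagonally.
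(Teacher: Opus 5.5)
Your argument has a genuine gap at the accumulation step you flag, and the repair you suggest is not available. Minimality in Definition \ref{defscheme} only compares the MES update with other admissible updates computed from the scheme's \emph{own} current data $(U_{i}^{j})_{i\in I}$. After one step, the MES iterate and the comparison state $\breve{U}^{j+1}$ of Lemma \ref{lemma3.1} differ. At the next step, the competitor produced by Lemma \ref{lemma3.1} is therefore built from $\widetilde{U}^{j+1}$, not from the MES data, so the one-step inequalities cannot be chained.

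Monotonicity of the MES step ``with respect to entropy level'' cannot be expected either. The set of states reachable in one step depends on the whole data vector through $\underline{F}_{i-1/2}^{j}$ and $\overline{F}_{i-1/2}^{j}$. So the one-step minimum is not a function of $\sum_{i}\eta(U_{i}^{j})\Delta x$ alone, which monotonicity would force it to be. Less entropy at $t_{j}$ therefore says nothing about the minimum at $t_{j+1}$.

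Restarting from common data does not propagate. Even with $\Theta^{j}$ frozen, the one-step $L^{1}$ Lipschitz constant of the update is only bounded by $1+\mathcal{O}(\Delta t/\Delta x)$. Under the CFL scaling this is a fixed constant larger than $1$, and it compounds over the $(\tau_{i}-\tau)/\Delta t$ steps. The bookkeeping is also not what Lemma \ref{lemma3.1} gives. It yields $4LC_{\eta}^{1}C_{F}^{1}\Delta t$ per step on the cells in $I_{0}$ and $\mathcal{O}(\varepsilon)$ per step on the remaining cells. Summing gives $4LC_{\eta}^{1}C_{F}^{1}(\tau_{i}-\tau)+\mathcal{O}(\varepsilon(\tau_{i}-\tau)/\Delta t)$, not $(\tau_{i}-\tau)$ times a quantity vanishing with $\varepsilon$.

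The paper never iterates; its proof is your fallback. It works as follows:
\begin{enumerate}
\item It takes a single step of length $\Delta t=T_{k}-T_{0}$ from common data at $T_{0}$.
\item Using (\ref{hatlinf}) and the construction of Lemma \ref{lemma3.1} (via Remark \ref{rem2}), it turns the integral-form discretization (\ref{hatscheme}) of $\widehat{U}$ into an admissible MES update $\breve{U}$.
\item Minimality then gives $\eta^{j+1}(U_{\Delta})\le\eta^{j+1}(\breve{U})$.
\item It bounds $\bigl|\int\eta(\widehat{U}(\cdot,t_{j+1}))\,dx-\sum_{i}\eta(\widehat{U}_{i}^{j+1})\Delta x\bigr|$ by $L_{B}\varepsilon$.
\item It reads off a contradiction with the gap $\mu$.
\end{enumerate}
So drop the multi-step part and the monotonicity conjecture, and make the one-step diagonal argument the proof.

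State explicitly what that argument rests on. The paper concludes only ``if $U_{\Delta}=\widetilde{U}$ in $t=T_{0}$''. It also lets the entropy after one step of length $T_{k}-T_{0}$ stand for $\mathcal{E}(U)(T_{k})$, although $U(\cdot,T_{k})$ is defined through $|\Delta|\to0$ at fixed $T_{k}$, that is, through many greedy steps. Your worry about greedy minimization over a finite interval is exactly what this identification sidesteps. Bounded a.e.\ convergence of $U_{\Delta}$ on $D_{T}$ does not supply it, so you must assume or justify it.

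Separately, your derivation of $T_{1}$ does not work. The numerical flux being a convex combination of $\underline{F}$ and $\overline{F}$ does not make $U_{i}^{j+1}$ a convex combination of neighbouring states, so there is no invariant-region argument. A per-step bound of order $C_{\rm cfl}\sup_{B_{T}}|F|$ does not keep $\mathcal{O}(1/\Delta t)$ iterates in $B_{T}$ uniformly in $\Delta$. The paper instead builds $U_{\Delta}:D_{T}\to B_{T}$ into the hypotheses of Theorem \ref{lw}.
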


\begin{proof}

Suppose that $U$ is an MES solution to (\ref{claws}), that is
a weak solution obtained as the limit of 
the approximations $U_{\Delta}$ that exist because of Theorem \ref{lw}. 
Suppose $\widehat{U}$ is a classical solution to the system (\ref{claws}) 
that violates the conditions of the Definition \ref{def-dafermos} 
after a time $t=T_{0}$. That is, $\mathcal{E}(U)(T_{0})
=\mathcal{E}(\widehat{U})(T_{0})$ and for every sequence
$(T_{k})_{k\in \mathbb{N}}$  converging to $T_{0}$, $T_{k}>T_{0}$,
\[
\begin{split}
& \int_{-L}^{L} \eta(U(x,t)) dx \leq \int_{-L}^{L} \eta(\widehat{U}(x,t))dx,
\; t\leq T_{0} \text{ and }\\ 
& \int_{-L}^{L} \eta(U(x,T_{k}))dx - \int_{-L}^{L} 
\eta(\widehat{U}(x,T_{k}))dx >\mu>0.
\end{split}
\]
Like in Lemma \ref{lemma3.1}, using the integral form (\ref{if}), we approximate $\hat{U}$ by taking
\begin{equation} \label{hatscheme}
\widehat{U}_{i}^{j+1} = \widehat{U}_{i}^{j} 
-\dfrac{\Delta t}{\Delta x}(\widehat{F}_{i+1/2}^{j}-\widehat{F}_{i-1/2}^{j}).
\end{equation}

Choose $T_{k}$ such that $\Delta t=T_{k}-T_{0}<\mu$, 
One can make the approximation arbitrary fine, so we may take $\Delta t$ small enough 
such that 
$$
\Big| \int_{-L}^{L}\eta(\hat{U}(x,T_{k}))dx-\sum_{i\in I}\eta(\hat{U}_{i}^{j})\Delta x
\Big| < \varepsilon, \; t_{j}=T_{k}.
$$
when $|\Delta|\to 0$, which determines the solution $U$.
By (\ref{hatlinf}), for 
$\varepsilon$ small enough, we may change the definition of $\widehat{U}_{i}^{j}$,
$i\in I$, $j=1,2, \ldots$ for the cells with indices belonging to 
$I \setminus I_{\rm nd}^{j}\cup I_{\rm nd}^{j+1}\cup I_{\rm m}^{j}$ and obtain the approximation
$\{\breve{U}_{i}^{j}, \; i\in I, \; j=1,2, \ldots\}$ so that
the flux function in (\ref{hatscheme}) satisfies the condition (\ref{nflux}).
That is, $(\breve{U}_{i}^{j})_{i\in I}$ is obtained by the scheme from 
Definition \ref{defscheme} for some choice of $\theta_{i}^{j}\in [0,1]^{m}$, 
$i\in I$, $j\geq 0$. So, for the same data at $t=T_{k}=$, 
$\eta^{j+1}(U_{\Delta})\leq \eta^{j+1}(\hat{U})$, where we have used the notation
$\eta^{j}(V):=\sum_{i\in I} \eta(V_{i}^{j})$ for a piecewise constant
approximation $V_{i}^{j}$, $i\in I$, $j=1,2, \ldots$ of the function $V$.

At the same time, using the differentiability of $\eta$, 
boundedness od $\widehat{U}$, and (\ref{hatlinf}), we have the following
inequality
\[
\begin{split}
& \Big| \int_{\Omega_{T}} \Big(\eta(\widehat{U}(\cdot,t_{j+1}))
- \eta^{j+1}(\widetilde{U})\Big) dx \Big| 
= \Big| \int_{\Omega_{T}}\eta(\widehat{U}(\cdot,t_{j+1}))dx 
- \sum_{i\in I}\eta(\widehat{U}_{i}^{j+1})\Delta x \Big| \\
\leq & \|\nabla \eta (\widehat{U})\|_{L^{\infty}(D_{T})}
\sum_{i\in I} \|\widehat{U}(x,t_{j})-\widehat{U}_{i}^{j}\|_{L^{\infty}(D_{T})} \Delta x
< L_{B}\varepsilon,
\end{split}
\]  
for $\Delta t<\mu$, where $L_{B}:=\|\nabla \eta (\widehat{U})\|_{L^{\infty}(D_{T})}
<\infty$.

That contradicts the fact that $\int_{-L}^{L} \eta(U(\cdot,T_{k})) 
- \int_{-L}^{L} \eta(\widehat{U}(\cdot,T_{k}))>\mu$ if $U_{\Delta}=\widetilde{U}$ in $t=T_{0}$.
\end{proof}

\begin{remark}
Let us note that Theorem \ref{t-2} holds only for one-dimensional 
conservation law systems. Moreover, there is no general answer about 
singularities, even for 1D systems, to the best of our knowledge. 
Therefore, we compare the MES solutions with the ones obtained by the most 
efficient algorithms for conservation law systems and the results 
from \cite{Bressan} and \cite{Diperna}.
The situation is even more complicated in the multidimensional case. 
There are no general efficient algorithms for solving these systems up to
our knowledge. 
In addition, nonuniqueness is a significant problem. One can look in 
\cite{CK2014}, \cite{DLS2010}, \cite{F2014} or \cite{FGSW}
to get some impression of problems with Dafermos' condition in 
multidimensional conservation law systems. It seems that each system 
has its peculiarities, and one has to look for the most physically 
reasonable admissibility conditions for the weak solutions.
Theorem \ref{lw} guarantees  an existence of MES regardless on the dimension. 
However, Theorem \ref{t-2} is not directly applicable, but  
it can be adapted to special cases: If a solution has singularities
contained in a set of Lebesgue measure zero and has a bounded variation
in each direction separately for $t$ fixed, then one can use multidimensional
rectangles and redefine the scheme (\ref{1dscheme}) adding $\theta$'s in
each variable. Nevertheless, computational effort is much higher in that
case and an efficient and precise numerical optimization scheme is needed.

\section{A numerical example}

Finding a minimum for a large number of variables is a very
challenging computational issues.
We present here a simple example computed using the Julia programming language
using the standard optimization packages.
They are based on the algorithm from \cite{WB}, the Interior
Point Newton method of second order. Let us remark that the first-order 
algorithms were less successful. The solution look almost the same, 
but with a lot of artificial
oscillations due to not-so-precise optimization results in a single time step.

The results shown below are for the problem
\[
\begin{split}
\partial_{t}\rho + \partial_{x}m = & 0 \\
\partial_{t}m + \partial_{x}\Big( \frac{m^{2}}{\rho}+\rho^{2}\Big) = & 0 	
\end{split}
\]  
with the Riemann initial data that produces 
a rarefaction wave followed by a shock.
The numerical results agree with the theoretical ones (wave speeds,
intermediate state between two waves). Calculations are made with 
1000 cells for the space interval, and 100 time steps (see Figure \ref{fig2}).
\end{remark}

\section*{Acknowledgment}
The author is grateful to Nata\v{s}a Kreji\'{c} for useful discussions
during the article preparation.

\begin{figure}[ht] \label{fig2}
	\includegraphics[width=0.7\textwidth]{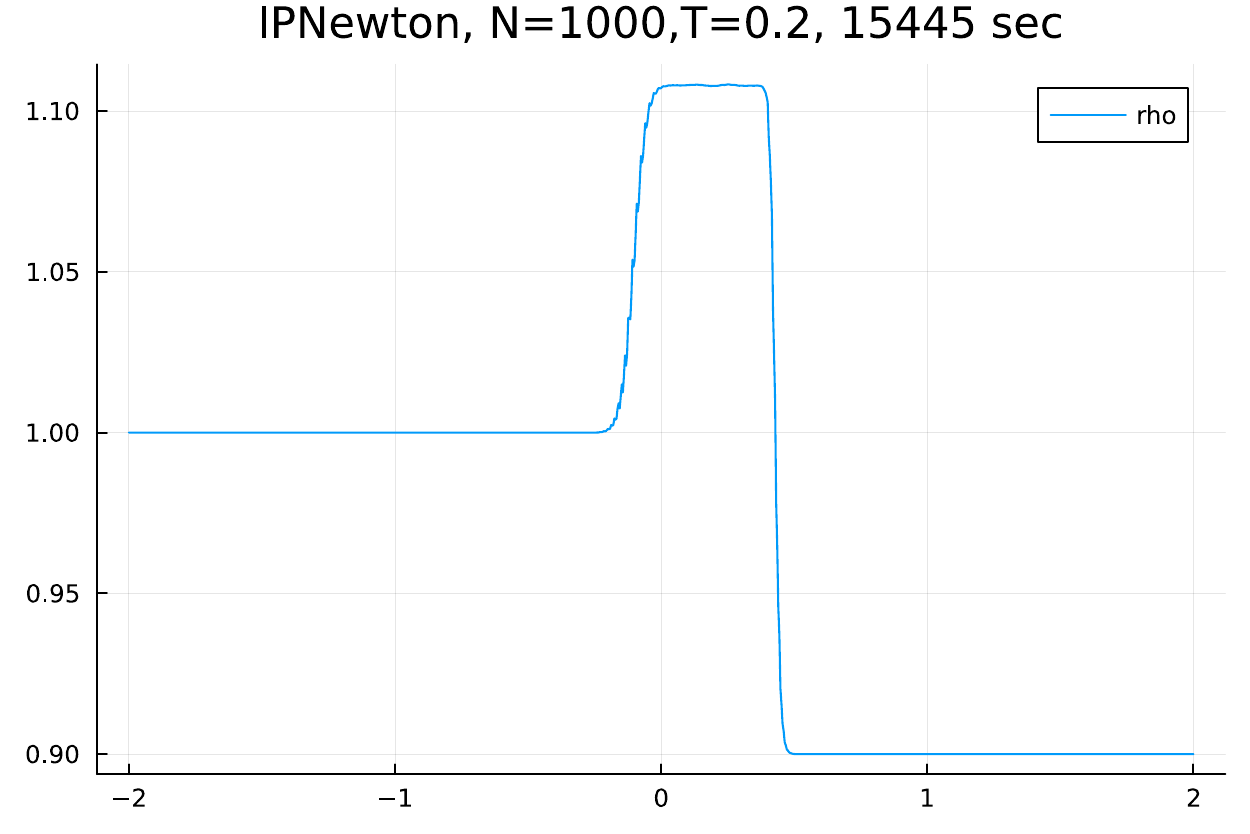}
    \includegraphics[width=0.7\textwidth]{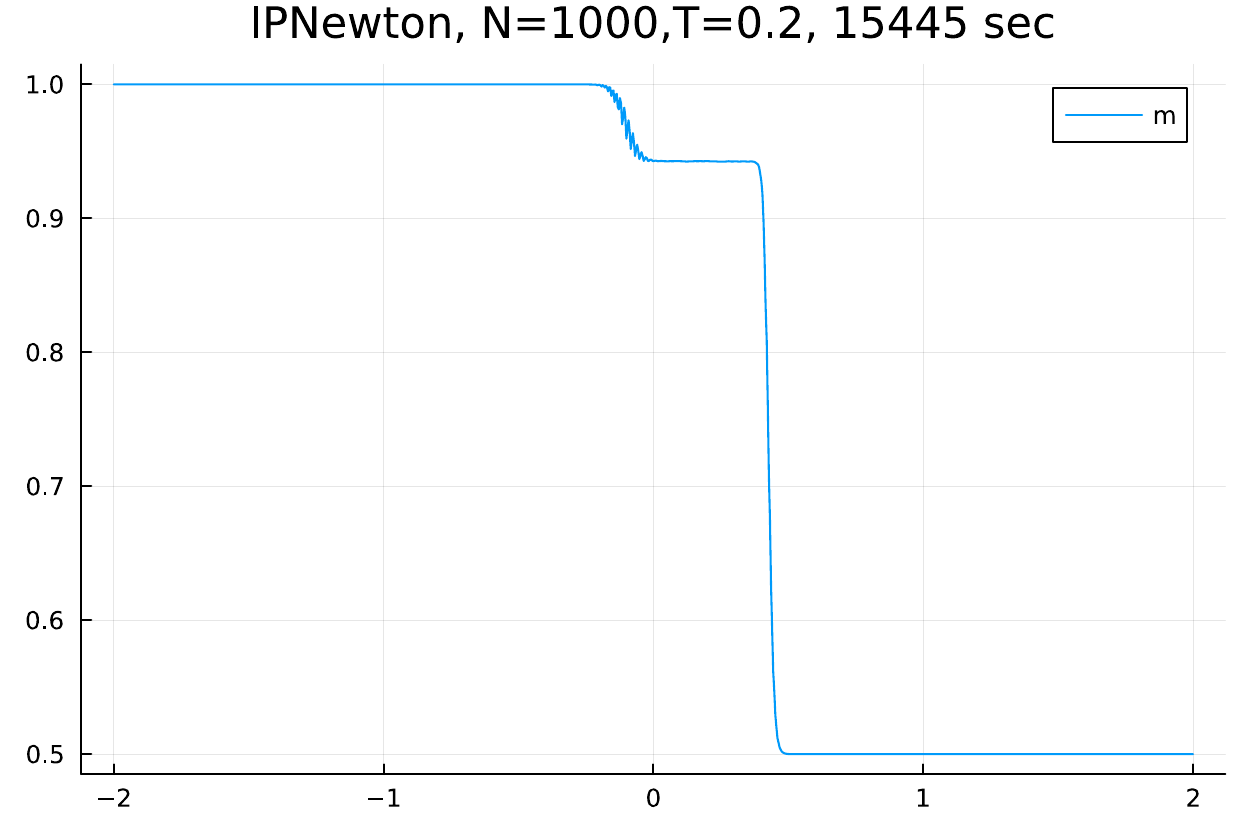}	
    \caption{$\rho$ and $m$ after 100 time steps}
\end{figure}

\end{document}